\documentclass[12pt, a4paper, oneside, reqno]{amsart}
\usepackage[numbered]{bookmark}
\usepackage{changepage} 
\usepackage[utf8]{inputenc}
\usepackage{graphics, graphicx}
\usepackage{amsmath, amsfonts, amsthm, amssymb}
\usepackage{mathrsfs, mathtools, mathabx}
\usepackage{verbatim} 
\usepackage{enumerate}
\usepackage{xcolor}
\usepackage{cite}


\newtheorem{theorem}{Theorem}[section]

\newtheorem{lemma}[theorem]{Lemma}
\newtheorem{proposition}[theorem]{Proposition}

\newtheorem{remark}{Remark}

\newcommand{\A}{\mathcal{A}} 
\newcommand{\cR}{\mathcal{R}}
\newcommand{\cZ}{\mathcal{Z}}

\newcommand{\Dom}{X_\A}

\newcommand{\AffDom}{\textup{Aff}_\A}
\newcommand{\AffDomC}{\textup{Aff}_{\A, \infty}}
\newcommand{\Exc}{\mathcal{E}_\A}

\newcommand{\Saff}{\textup{Aff}^1(\pi, \lambda, \omega)}

\newcommand{\N}{\mathbb{N}}

\newcommand{\R}{\mathbb{R}}

\newcommand{\Function}[5]{\begin{array}{cccc} #1 : & #2 & \rightarrow & #3 \\ & #4 & \mapsto & #5 \end{array}}
\hoffset=-15mm \textwidth 160mm \topmargin -0mm \textheight 231mm

\date{}
\author[F.~Trujillo]{Frank Trujillo}
\address{Centre de Recerca Matemàtica, 08193 Bellaterra,
Barcelona, Spain}
\email{ftrujillo@crm.cat}
\begin{document}
\sloppy
\title{On the uniqueness of affine IETs semi-conjugated to IETs}
\maketitle
\begin{abstract}
We prove that for almost every irreducible interval exchange transformation $T$ and for any vector $\omega$ in its associated central-stable space (with respect to the Kontsevich-Zorich cocycle) there exists a unique AIET, up to normalization of its domain,  semi-conjugated to $T$ and whose log-slope vector equals $\omega$. This provides a partial answer to a question raised by S. Marmi, P. Moussa, and J.-C. Yoccoz in \cite{marmi_affine_2010}.
\end{abstract}

\section{Introduction}


\emph{Interval exchange transformations} (IETs) are bijective piecewise translations of a bounded interval with a finite number of discontinuities. They appear naturally as first-return maps of so-called \emph{locally Hamiltonian} flows on surfaces and have been the subject of extensive study in the last decades. First return maps associated with more general flows on surfaces lead to the so-called \emph{generalized IETs} (GIETs), which are bijective piecewise smooth maps with a finite number of discontinuities and positive derivative. As a convention, we assume the GIETs to be right continuous.

GIETs are sometimes seen as a natural generalization of circle diffeomorphisms, as this latter class appears naturally as first return maps of flows in the torus, where the set of IETs plays the role of circle rotations. Many recent works have explored to what extent certain aspects of the theory of circle diffeomorphisms extend to the GIET context (see, e.g., \cite{levitt_decomposition_1987, forni_solutions_1997, marmi_cohomological_2005, marmi_affine_2010, marmi_linearization_2012, ghazouani_local_2021, ghazouani_priori_2023}.) 

Let us mention some of the basic similarities between the theory of GIETs and circle diffeomorphisms, which play an important role in the present work. 

There exists a well-developed renormalization theory for IETs based on the so-called \emph{Rauzy-Veech induction} \cite{rauzy_echanges_1979, veech_gauss_1982}, which naturally extends to GIETs (see, e.g., \cite{marmi_affine_2010, marmi_linearization_2012}), and that generalizes the classical Gauss renormalization procedure for circle maps.  To each \emph{infinitely renormalizable} GIET (see \S \ref{sc:RV}) we can associate a semi-conjugacy invariant known as the \emph{combinatorial rotation number} or, simply, \emph{rotation number} (see \S \ref{sc:combinatorial_rot}), which generalizes the notion of rotation number in the circle diffeomorphisms setting.  Similar to the circle case, where every rotation number can be identified with a unique circle rotation, \emph{almost every} combinatorial rotation number (see \cite[\S 2.1]{berk_rigidity_2024} for a definition of almost every in this context) can be uniquely identified with an IET. Moreover, a GIET whose rotation number belongs to the full-measure set of \emph{$\infty$-complete} rotation numbers is semi-conjugated to a unique IET (see \S \ref{sc:combinatorial_rot}), and it is conjugated if, in addition, the GIET has no wandering intervals.

However, in contrast to the circle diffeomorphism setting, regularity alone is not enough to guarantee the absence of wandering intervals, see \cite{cobo_piece-wise_2002, marmi_affine_2010, cobo_wandering_2018}. Furthermore, smooth linearization of GIETs (i.e., the existence of a smooth conjugacy with an IET) does not depend only on the regularity and rotation number of the transformations. Moreover, important restrictions are needed for this conjugacy to exist (see, e.g., \cite{marmi_linearization_2012, ghazouani_local_2021, ghazouani_priori_2023}). 

The class of \emph{affine IETs} (AIETs), that is, piecewise linear GIETs, are an intermediate class between IETs and GIETs (which also appear as first return maps of certain flows on so-called \emph{dilation surfaces}, see \cite{duryev_dilation_2019}) whose behavior with respect to the Rauzy-Veech induction can be more easily understood than for GIETs (see, e.g., equations \eqref{eq:cocycle_relation_lengths} and \eqref{eq:acc_cocycle_relation_lengths} which the describe the change on the length and slope vectors after induction) but that keep some of the complexity of GIETs. Indeed, although in some settings smooth conjugacy with an IET is not possible, large classes of GIETs are smoothly conjugated to AIETs (see, e.g., \cite{cunha_rigidity_2014, berk_rigidity_2024}). 

In \cite{marmi_affine_2010}, S. Marmi, P. Moussa, and J.-C. Yoccoz showed that wandering intervals are a common feature among certain classes of AIETs. To study this phenomenon, the authors first analyzed the set of AIETs, with fixed slopes, semi-conjugated to a given IET. They provided necessary and sufficient conditions for this set to be non-empty and showed that if one normalizes the domain of the maps in this set to the unit interval, then this set can be parametrized as a compact simplex in a finite-dimensional space.

 More precisely, if $T = (\pi, \lambda) \in  \mathfrak{G}^0_\A \times \Delta_\A$ is an infinitely renormalizable \emph{irreducible} IET on $[0, 1)$ and $\omega \in \R^\A$, where $\A$ is a finite set (see \S \ref{sc:IETs} for the relevant notations), then the set $\Saff$ of AIETs on $[0, 1)$ semi-conjugated to $T$ and whose \emph{log-slope vector} (i.e., the vector given by the logarithm of the slopes on each of the exchanged intervals) is $\omega$ verifies the following.
 \begin{itemize}
 \item $\Saff \neq \emptyset$ if and only if $\langle \omega, \lambda \rangle = 0$.
 \item The set $\Saff$, parametrized using the \emph{length vector} (i.e.,  the vector given by the length of its exchanged intervals) of its elements, is a compact subsimplex of $\Delta_\A$.
 \end{itemize}
For a proof of these facts, see \cite[Proposition 2.3]{marmi_affine_2010}.  

Heuristic arguments (see \cite[Remarks 2.1, 3.2]{marmi_affine_2010}) led the authors in \cite{marmi_affine_2010} to suggest that, for almost every IET $(\pi, \lambda) \in  \mathfrak{G}^0_\A \times \Delta_\A$ and any $\omega \in \R^\A$ satisfying $\langle \omega, \lambda \rangle = 0$, the set $\Saff$ should consist of exactly one element. 

The goal of this article is to give a partial answer to the question above, by showing that the conclusion is correct in the case where the log-slope vector belongs to a certain non-empty subspace of positive codimension associated with the IET $(\pi, \lambda)$, namely, to the \emph{central-stable space} $E_{cs}(\pi, \lambda)$ associated with the \emph{Zorich cocycle} (see \S \ref{sc:lyapunov} for the relevant definitions). 

More precisely, our aim is to show the following.

\begin{theorem}
\label{thm: uniqueness}
For a.e. $(\pi, \lambda) \in \mathfrak{G}^0_\A \times \Delta_\A $ and for any $\omega \in E_{cs}(\pi, \lambda)$ there exists a unique AIET on $[0, 1)$ with log-slope $\omega$ semi-conjugated to $(\pi, \lambda)$, that is, $\#\Saff = 1$.
\end{theorem}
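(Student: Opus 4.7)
The plan is to argue by contradiction via simultaneous Rauzy--Veech induction applied to $T$ and to two hypothetical distinct AIETs $S_1, S_2 \in \Saff$. Suppose their length vectors $\lambda^{S_1}, \lambda^{S_2} \in \Delta_\A$ are distinct and set $v := \lambda^{S_1} - \lambda^{S_2}$, which is a nonzero sum-zero vector in $\R^\A$. Since $S_1$ and $S_2$ are both semi-conjugated to $T$, all three transformations share the same combinatorial rotation number, so Rauzy--Veech (or its Zorich acceleration) produces a common sequence of matrices $B_n$. By the cocycle relations \eqref{eq:cocycle_relation_lengths} and \eqref{eq:acc_cocycle_relation_lengths}, the renormalized length vectors $\lambda^{S_i,(n)}$ are obtained from $\lambda^{S_i}$ via the inverse of this matrix cocycle, while the renormalized log-slope $\omega^{(n)}$ evolves under the Zorich cocycle itself.

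The next step is to exploit the central-stable hypothesis. By Oseledets' theorem applied to the Zorich cocycle along a generic $(\pi, \lambda)$, $\omega \in E_{cs}(\pi, \lambda)$ implies $\limsup \tfrac{1}{n}\log\|\omega^{(n)}\| \leq 0$, so $(\omega^{(n)})_n$ stays sub-exponential. The structural input I would seek is a characterization of the tangent space to $\Saff$ at any of its points as a subspace of $\{v\in\R^\A : \sum_\alpha v_\alpha = 0\}$ determined by $\pi, \lambda, \omega$. The hoped-for mechanism is that this tangent space be dual --- via the symplectic form $\Omega_\pi$ associated with the Zorich cocycle, whose kernel $\Kpi$ records its central directions --- to a subspace complementary to the cocycle orbit of $\omega$, so that the sub-exponential control on $\omega^{(n)}$ combined with the a.e.\ Oseledets regularity forces the tangent space to vanish identically. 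This would yield $\Saff = \{\lambda^{S_1}\}$, contradicting $v\ne 0$.

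The main obstacle is making this duality precise, especially on the central summand $E_c$ (where the Zorich cocycle has zero Lyapunov exponents). A naive argument based only on sub-exponential norm control of $\omega^{(n)}$ would not be enough there, since the cocycle action on the renormalized subsimplex does not contract at a definite rate. Closing the argument in this regime likely requires further ingredients, such as the almost-sure simplicity of the Zorich cocycle spectrum, or a direct cohomological argument combining the semi-conjugacy equations with the cocycle relations to pin down the tangent subspace explicitly in terms of $\omega$ and thereby rule out any nontrivial $v$ for $\omega\in E_{cs}$.
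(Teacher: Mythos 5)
Your setup is on the right track --- two elements of $\Saff$ share the combinatorial rotation number and the log-slope vector, so their length vectors are governed by the same twisted cocycle, and the hypothesis $\omega \in E_{cs}(\pi,\lambda)$ is indeed what controls that cocycle --- but the argument has a genuine gap exactly where you flag it, and the mechanism you propose to close it is not the one that works. The paper does not introduce a ``tangent space to $\Saff$'' or any duality via a symplectic form; instead it uses the elementary observation \eqref{eq:length_vector_cones} that every length vector of an element of $\Saff$ lies in the nested intersection of simplicial cones $\mathcal{C}(\gamma,\omega)=\bigcap_{n\geq 0} A_n(\gamma,\omega)(\R^\A_+)$, and reduces the theorem to showing that this intersection is a single ray. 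The engine is projective (Hilbert-metric) contraction: a positive matrix with entries bounded above and below by a fixed constant contracts the projective metric by a definite factor $\kappa<1$ (Lemma \ref{lem:contraction_matrices}), so it suffices to exhibit infinitely many blocks of the \emph{twisted} cocycle that are positive with uniformly bounded entries.

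This is where your own diagnosis is correct but your remedy is insufficient: sub-exponential growth of $\omega^{(n)}$, which is all that Oseledets' theorem gives on $E_{cs}$, does \emph{not} yield uniformly bounded entries for the twisted matrices, since by Lemma \ref{lem:matrices} those entries depend polynomially on the slopes $e^{\omega^n}$ along the block, and a slowly growing $|\omega^{n}|$ would still destroy the uniformity of the contraction factor. The missing ingredient is the Bounded Central Condition (Proposition \ref{prop:generic_condition}, imported from \cite{trujillo_affine_2024}): for a.e. $(\pi,\lambda)$ there is a subsequence $(n_k)$ along which $\big\|{}^TA_{n_k}(\pi,\lambda)\mid_{E_{cs}(\pi,\lambda)}\big\|$ is \emph{bounded} by a constant $V$ --- not merely sub-exponential --- and the untwisted block $A_{n_k,n_k+N}(\pi,\lambda)$ is positive. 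Boundedness of $\omega^{n_k}$ then bounds the entries of the positive twisted blocks $A_{n_k,n_k+N}(\gamma,\omega)$ above and below, giving a uniform $\kappa<1$ and hence $\kappa(A_{m_1,m_{k}}(\gamma,\omega))\to 0$, which collapses the projected cone to a point and yields uniqueness of the length vector. Without this quantitative input your contradiction argument cannot be closed; neither simplicity of the Zorich spectrum nor a cohomological identity is used, or would substitute for it.
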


%

%
%
%


Theorem \ref{thm: uniqueness} will hold for any IET verifying the so-called \emph{Bounded Central Condition} (see Proposition \ref {prop:generic_condition}) which was introduced by  C. Ulcigrai and the author in \cite{trujillo_affine_2024}.

Let us point out that a similar result concerning an exceptional zero-measure class of IETs, known as IETs with \emph{bounded combinatorics}, had been previously obtained by K. Cunha and D. Smania in \cite[Proposition 3.10]{cunha_rigidity_2014}. Also, let us mention that the results in \cite{trujillo_affine_2024} show that, for a.e. IET $(\pi, \lambda)$ and for any $\omega$ as in Theorem \ref{thm: uniqueness}, any AIET in $\Saff$ is necessarily conjugated to $(\pi, \lambda)$. 

To prove our main result, we make use of a characterization of the set $\Saff$ introduced in \cite{marmi_affine_2010}, which is inspired by a classical argument to show unique ergodicity for almost every IET, and that reduces the question to understanding the intersection of certain simplicial cones associated with the AIET and showing that it reduces to a one-dimensional set. We describe this in detail at the beginning of Section \ref{sc:main}.

Although the authors in \cite{cunha_rigidity_2014} make use of the same observation, the arguments to conclude that the intersection reduces to a one-dimensional set rely heavily on the  bounded combinatorics assumption whereas our method relies on ergodic properties of the (accelerated) Rauzy-Veech renormalization and, in particular, in the so-called \emph{Bounded Central Condition} introduced by C. Ulcigrai and the author in \cite{trujillo_affine_2024}.



\section{Affine IETs and Rauzy-Veech induction} 
\label{sc:background}
In this section, we introduce the notations and basic properties for the spaces of IET/AIETs and some of the properties of the (accelerated) Rauzy-Veech induction/renormalization which will be used in this work. 

To avoid repetition, and since a big part of this background material appears already in many recent works concerning IETs, we will mostly limit ourselves to introducing the notations and describing Rauzy-Veech induction directly for AIETs.  Let us point out that, allthough initially defined only for IETs, the classical Rauzy-Veech induction extends naturally to AIETs (and, more generally, to GIETs, see, e.g., \cite{ghazouani_priori_2023}).  We will prove some properties concerning the Rauzy-Veech procedure for AIETs, but we refer to \cite{berk_rigidity_2024}, from where we borrow most of our notation, for more details on the classical Rauzy-Veech renormalization for IETs and other objects associated with it.

\subsection{The spaces of IETs and AIETs} 
\label{sc:IETs}
Throughout this paper, we fix $d \geq 2$ and a finite alphabet $\A$ with $d$ elements. We denote by $\Delta_\A$ the simplex of vectors in $\R_+^\A$ such that $|\lambda|:= |\lambda|_1 = 1$ and by $\mathfrak{G}^0_\A$ the set of \emph{irreducible permutations} on $\A$, which we encode as tuples  of the form $(\pi_0, \pi_1)$, where $\pi_0, \pi_1: \A \to \{1, \dots, d\}$ are bijective maps satisfying $\pi_1 \circ \pi_0^{-1}(\{1, \dots, k\}) \neq \{1, \dots, k\}$, for any $1 \leq k < d$.

\subsubsection*{Interval Exchange Transformations (IETs)} 
We can parametrize the space of \emph{irreducible} IETs (i.e., IETs whose permutation is irreducible) using the set $\mathfrak{G}^0_\A \times \R^\A_+$. More precisely, each $(\pi, \lambda) \in \mathfrak{G}^0_\A \times \R^\A_+$ can be uniquely identified with a partition of $[0, |\lambda|)$ into subintervals $\{I_\alpha\}_{\alpha \in \A}$, with lengths given by the \emph{length vector} $\lambda$ and ordered according to $\pi_0$, and a bijective right continuous piecewise translation $T = T_{(\pi, \lambda)}: [0, |\lambda|) \to [0, |\lambda|)$ which exchanges the subintervals in the partition according to the \emph{permutation} $\pi$.

Similarly, the set $\mathfrak{G}^0_\A \times \Delta_\A$ encodes the \emph{normalized} irreducible IETs, that is, the set of irreducible IETs defined on $[0, 1)$.


\subsubsection*{Affine Interval Exchange Transformations (AIETs)} We can parametrize the space of irreducible AIETs using the set of triples $(\pi, \ell, \omega) \in \mathfrak{G}^0_\A \times  \R^\A_+ \times  \R^\A$ satisfying $\langle \ell, e^\omega \rangle = 1$, where as an abuse of donation we denote by $e^\omega \in \R_+^\A$ the vector obtained by applying the exponential map to each coordinate of $\omega$. More precisely, each such triple can be uniquely identified with a partition of $[0, |\lambda|)$ into subintervals $\{I_\alpha\}_{\alpha \in \A}$, with lengths given by the \emph{length vector} $\ell$ and ordered according to $\pi_0$, and a bijective right continuous piecewise linear map $f = f_{(\pi, \ell, \omega)}: [0, |\lambda|) \to [0, |\lambda|)$ which modifies linearly the lengths of these subintervals using the \emph{slope vector} $e^\omega$, and then exchanges them according to the \emph{permutation} $\pi$. 

Similarly, the set of triples in $(\pi, \ell, \omega) \in \mathfrak{G}^0_\A \times  \Delta_\A \times  \R^\A$ satisfying $\langle \ell, e^\omega \rangle = 1$ encodes the irreducible IETs on $[0, 1)$.

\begin{remark}
The choice of using the so-called \emph{log-slope} vector $\omega$ instead of the slope vector $e^\omega$ is only a matter of convenience justified by the behaviour of the log-slope vector with respect to the Kontsevich-Zorich cocycle $($see \S \ref{sc:cocycles}, in particular, equation \eqref{eq:acc_cocycle_relation_lengths}$)$.
\end{remark}

Notice that we can naturally identify the space of IETs with a subspace of AIETs given by the triples of the form $(\pi, \lambda, \overline{1})$, where $\overline{1} \in \R^\A_+$ is the vector having $1$ on each coordinate.

 In the following, to avoid confusion, when referring to an IET we denote its length vector using the letter $\lambda$ and, when referring to an AIET we denote its length vector using the letter $\ell$.


\subsection{Rauzy-Veech induction}
\label{sc:RV}

We denote by $\Exc$  the \emph{full-measure set} of IETs, with respect to the product of the Lebesgue measure on $\R^\A_+$ and the counting measure $d\pi$ on  $\mathfrak{G}^0_\A$, satisfying \emph{Keane's condition} (see \cite{keane_interval_1975}). We denote by $\Exc^1$ the \emph{normalized} version of this space, that is, the set obtained by normalizing the domain of elements in $\Exc$ to $[0, 1)$. Similarly, we denote by $\AffDom$ the subset of AIETs in $\R^\A_+ \times \R^\A$ satisfying \emph{Keane's condition} and by $\AffDom^1$ its normalized version.


We will denote by $\cR: \AffDom \to \AffDom $ the \emph{Rauzy-Veech induction} and, for any AIET $f = (\pi, \ell, \omega) \in \AffDom$, we denote its iterates by $\cR$ as
\[(\pi^n, \ell^n, \omega^n):= \cR^n(\pi, \ell, \omega)\qquad \text{ for any }  n \geq 0.\]
The Rauzy-Veech induction preserves the space of IETs and so, for any IET $(\pi, \lambda) \in \Exc$, we denote
\[(\pi^n, \lambda^n):= \cR^n(\pi, \lambda) := \cR^n(\pi, \lambda, \overline{1}), \qquad \text{ for any }  n \geq 0.\]
We denote the normalized version of this operator by $\tilde \cR : \AffDom^1 \to \AffDom^1$, obtained by normalizing the domain of the AIET given by $\cR$ to $[0, 1)$.  It follows from the classical works of H. Masur \cite{masur_interval_1982} and W. Veech \cite{veech_gauss_1982} that $\tilde \cR: \Exc^1 \to \Exc^1$ admits a unique (modulo a multiplicative constant) invariant infinite measure $\mu_{\cR}$ equivalent to the product of Lebesgue and couting measures. 


\subsubsection*{The induction procedure} 
Given $f = (\pi, \ell, \omega) \in \AffDom$ with exchanged intervals $\{I_\alpha\}_{\alpha \in \A}$, we define its \emph{type} as
\[ \epsilon(f)  := \begin{cases} 0  \quad \text{ if } \quad |I_{\alpha_0}| > |f(I_{\alpha_1})|, \\ 1 \quad  \text{ if } \quad |I_{\alpha_0}| < |f(I_{\alpha_1})|, \end{cases}\]
where we denote $\alpha_{\epsilon} := \pi_\epsilon^{-1}(d)$, for any $\epsilon \in \{0, 1\}$. Notice that Keane's condition guarantees that $ |I_{\alpha_0}| \neq |f(I_{\alpha_1})|$. The symbols $\alpha_{\epsilon(f)}$ and $\alpha_{ 1 -\epsilon(f)}$ are referred to as the \emph{loser} and \emph{winner} symbols of $f$ respectively. We denote the type of the iterates of $f$ by $\cR$ as
\[\epsilon^n = \epsilon(\cR^n(f)),\qquad \text{ for any }  n \geq 0.\]

The \emph{Rauzy-Veech induction} $\cR(f)$ is given by inducing $f$ to the subinterval obtained by removing from $[0, |\ell|)$ either the last of the exchanged intervals (namely $I_{\alpha_0}$) or the image of the interval that becomes the last (namely $f(I_{\alpha_{1}})$), whichever is smaller. It is easy to check that the map $\cR(f)$ is an AIET with the same number of exchanged intervals of $f$ and that this operator preserves the space of IETs. Moreover, Keane's condition guarantees that the map $\cR$ can be iterated infinitely many times on $\AffDom$. 



\subsection{Combinatorial rotation number}
\label{sc:combinatorial_rot}
The projected action of the Rauzy-Veech induction defines a directed graph structure on $\mathfrak{G}^0_\A$, where each vertex has exactly two incoming and two outgoing edges which the type of the IETs can label. Any invariant subset $\mathfrak{R}\subseteq \mathfrak{G}^0_\A$ under this projected action is called a \emph{Rauzy graph}. 

Any (finite or infinite) admissible sequence of permutations in the directed graph structure of $\mathfrak{R}$ is called a \emph{Rauzy path}. The infinite path in the Rauzy graph defined by an AIET $f$ satisfying Keane's condition is called its \emph{combinatorial rotation number}, which we denote by $\gamma(f)$. 

\subsubsection*{Infinite completeness and semi-conjugacies} The combinatorial rotation number of an infinitely renormalizable IET is \emph{infinitely complete} (\emph{$\infty$-complete}), i.e., each symbol in the alphabet appears as the winner symbol infinitely many times.  Moreover, there exists at least one IET verifying Keane's condition associated with any $\infty$-complete path, and this correspondence is one-to-one when restricted to uniquely ergodic IETs (see Corollary 5 and Proposition 6 in \cite{yoccoz_echanges_2005}).   By \cite[Proposition 7]{yoccoz_echanges_2005}, any AIET satisfying Keane's condition and whose combinatorial rotation number is $\infty$-complete is semi-conjugated to an IET with the same combinatorial rotation number.

Given $(\pi, \lambda) \in \mathfrak{G}^0_\A \times \Delta_\A $ and $\omega \in \R^\A$, we denote 
\[ \Saff := \{ f \in \AffDom \mid f \text{ is semi-conjugated to } (\pi, \lambda) \text{ and has log-slope } \omega\}.\]
 By \cite[Proposition 7]{yoccoz_echanges_2005}, it follows that
 \[ \Saff = \{ f \in \AffDom \mid \gamma(f) = \gamma(T) \}. \]

\subsection{Zorich acceleration} We denote by $\AffDomC$ the set of AIETs in $\AffDom$ whose rotation number is $\infty$-complete. Recall that by the remarks above, the rotation number of any IET in $\Exc$ is $\infty$-complete. In particular, 
\[ \{f \in \AffDom \mid \exists T \in \Exc \text{ s.t. $f$ is semi-conjugated to $T$}\} \subseteq \AffDomC.\]
We will denote by $\cZ: \AffDomC \to \AffDomC$ the  \emph{Zorich acceleration} of $\cR$, which is given by
\begin{equation}
\label{eq:Zorich_acc}
\cZ(f) = \cR^{z(f)}(f), \qquad \text{ where } z(f) := \max \{ k \geq 1 \mid \epsilon^i(f)=\epsilon(f), \text{ for } 0 \leq i \leq k \}.
\end{equation}
Notice that if $\gamma(f)$ is $\infty$-complete, then $z(\cR^n(f)) < +\infty$, for any $n \geq 0$, and we can define an increasing sequence $z_n(f)$ such that $\cZ^n(f) = \cR^{z_n(f)}(f)$, for any $n \geq 0$. We denote the normalized version of this operator by $\tilde \cZ : \AffDom^1 \to \AffDom^1$.

For any AIET $f = (\pi, \ell, \omega) \in \AffDom$  we denote its iterates by $\cZ$ as
\[(\pi^{(n)}, \ell^{(n)}, \omega^{(n)}):= \cZ^n(\pi, \ell, \omega), \qquad \text{ for any $n \geq 0$},\] and for any IET $(\pi, \lambda) \in \Exc$, we denote
\[ (\pi^{(n)}, \lambda^{(n)}):= \cZ^n(\pi, \lambda) := \cZ^n(\pi, \lambda, \overline{1}), \qquad \text{ for any $n \geq 0$}.\]
It follows from the work of A. Zorich \cite{zorich_finite_1996} that the operator $\tilde \cZ : \Exc^1 \to \Exc^1$ preserves a unique invariant probability measure $\mu_{\cZ}$ equivalent to the product of Lebesgue and counting measures. 



\subsection{The (twisted) Rauzy-Veech cocycle}
\label{sc:cocycles}

We can describe the length vectors of the subsequent iterates by $\cR$ of an AIET through the \emph{twisted Rauzy-Veech cocycle} 
\[\Function{A}{\AffDom}{GL(\A, \R)}{f = (\pi, \ell, \omega)}{A(\pi, \epsilon(f), \omega)},\] 
where
\begin{equation}
\label{eq:RV_cocycle}
A(\pi, \epsilon, \omega) := \left\{ \begin{array}{ll} \textup{Id} + \rho_{\alpha_{1 - \epsilon}}E_{\alpha_{\epsilon}, \alpha_{1 -\epsilon}}, &  \text{if }  \epsilon = 0,\\
\textup{Id} + E_{\alpha_{\epsilon}, \alpha_{1 -\epsilon}} +  (\rho_{\epsilon} - 1)E_{\alpha_{1 - \epsilon}, \alpha_{1 -\epsilon}}, &   \text{if }  \epsilon = 1,
\end{array} \right. \qquad \rho := e^\omega \in \R^\A_+,
\end{equation}
and $E_{\alpha, \beta}$ is the matrix with zero entries except for the $(\alpha, \beta)$ entry which equals $1$.

Notice that we recover the classical Rauzy-Veech cocycle if we restrict ourselves to the set of IETs, that is, whenever $\rho = \overline{1} \in \R^\A$, or, equivalently,  whenever $\omega = \overline{0}$. 

Since $A$ only depends on the permutation, type and log-slope vector (in particular, only on the permutation and type when restricted to IETs), it will be useful for us to introduce some notation depending on the combinatorial rotation number (which encodes permutation and types of the AIET and all of its iterates) and the log-slope vector. 

For any infinite Rauzy path $\gamma = \{(\pi^i, \epsilon^i)\}_{i \geq 0}$ and for any $0 \leq m < n$, we define
\[
\begin{array}{ll}
A_{m, n}(\gamma) := A_n(\pi^m, \epsilon^m, \overline{1}) \dots A(\pi^{n - 1}, \epsilon^{n - 1}, \overline{1}), &  \qquad A_n(\gamma):= A_{0, n}(\gamma), \\
A_{m, n}(\gamma, \omega) := A_n(\pi^m, \epsilon^m, A_m(\gamma)\omega) \dots A(\pi^{n - 1}, \epsilon^{n - 1}, A_{n}(\gamma)\omega), & \qquad  A_n(\gamma, \omega):= A_{0, n}(\gamma, \omega).
\end{array}
\]
Notice that if $\gamma = \gamma(f)$ for some $f = (\pi, \ell, \omega) \in \AffDom$ then, for any $0 \leq m < n$, 
\[ A_{m, n}(\gamma, \omega) = A(\cR^m(f)) \dots A(\cR^{n - 1}(f)).\]
\subsubsection*{Cocycle relation for the length and log-slope vectors.} Given an AIET $f = (\pi, \ell, \omega) \in \AffDom$ and using the notations introduced in Section \ref{sc:RV} for the iterates of $f$ by $\cR$, we have  
\begin{equation}
\label{eq:cocycle_relation_lengths}
\omega^{m} = A_{m, n}(\gamma(f))^T \omega^n,  \qquad  \ell^{m} =  A_{m, n}(\gamma(f), \omega)^{-1} \ell^n,
\end{equation}
for any $0 \leq m < n$. 

As in the definition of $\cZ$ using $\cR$ (see \S \ref{sc:RV}), we can accelerate the cocycle $A$ to describe the iterates by $\cZ$ of an AIET. The \emph{Kontsevich-Zorich cocycle} is given by 
 \[\Function{B}{\AffDomC}{GL(\A, \R)}{f = (\pi, \ell, \omega)}{A_{z(f)}(\pi, \ell, \omega)},\] 
where $z(f)$ is given by \eqref{eq:Zorich_acc}. 

As before, we can define matrices associated wtih Rauzy paths as follows.  To any $\infty$-complete Rauzy path $\gamma = \{(\pi^i, \epsilon^i)\}_{i \geq 0}$ we can associate an increasing sequence $(z_n(\gamma))_{n \geq 1}$ such that 
\[ \epsilon^i = \epsilon^j, \quad \text{ for any } z_{n}(\gamma) \leq i, j < z_{n + 1}(\gamma), \qquad \epsilon^{z_n(\gamma)} \neq  \epsilon^{z_{n + 1}(\gamma)},\]
for any $n \geq 1$. We define, for any $\omega \in \R^\A$ and any $0 \leq m < n$,
\[
\begin{array}{ll} 
B_{m, n}(\gamma) := A_{z_m(\gamma), z_n(\gamma)}(\gamma), &   \qquad B_n(\gamma):= B_{0, n}(\gamma),\\
B_{m, n}(\gamma, \omega) := A_{z_m(\gamma), z_n(\gamma)}(\gamma, \omega) & \qquad  B_{0, n}(\gamma, \omega):= B_{0, n}(\gamma, \omega).
\end{array}
\]
Then, for any  $f = (\pi, \ell, \omega) \in \AffDomC$, we have
\begin{equation}
\label{eq:acc_cocycle_relation_lengths}
\omega^{(m)} =  B_{m, n}(\gamma(f))^T \omega^{(n)},   \qquad  \ell^{(m)}  =B_{m, n}(\gamma(f), \omega)^{-1} \ell^{(n)},
\end{equation}
for any $0 \leq m < n$. 

It follows from the work of A. Zorich \cite{zorich_finite_1996} that the Kontsevich-Zorich cocycle $B$ admits an Osedelet's filtration with respect to $\cZ$ (see \S \ref{sc:lyapunov}). Notice that equation \eqref{eq:acc_cocycle_relation_lengths} hints to the importance of the position of the log-slope vector $\omega$ inside the Oseledet's filtration associated with $B$.

\subsubsection*{Simplicial cones} Since the cocycle $A$ is given by non-negative matrices, for any infinite Rauzy path $\gamma$ and any $\omega \in \R^\A$, 
\begin{equation}
\label{eq:nested_seq}
A_{n + 1}(\gamma, \omega)(\R_+^\A) = A_{n}(\gamma, \omega) A_{n, n + 1}(\gamma, \omega) (\R_+^\A) \subseteq  A_{n}(\gamma, \omega)(\R_+^\A) \subseteq \R^\A_+,
\end{equation}
for any $n\geq 0$,  and thus 
\[ \mathcal{C}(\gamma, \omega):=  \bigcap_{n \geq 0} A_{n}(\gamma, \omega)(\R_+^\A) \subseteq \R_+^\A\]
defines a simplicial cone. Notice that if $\gamma$ is $\infty$-complete, then 
\[ \mathcal{C}(\gamma, \omega) =  \bigcap_{n \geq 0} B_{n}(\gamma, \omega)(\R_+^\A).\]
Moreover, by \eqref{eq:cocycle_relation_lengths}, we have 
\begin{equation}
\label{eq:length_vector_cones}
\ell \in  \mathcal{C}(\gamma(f), \omega), \qquad \text{ for any } f = (\pi, \ell, \omega) \in \AffDom.
\end{equation}

\subsection{Oseledet's filtration of the Kontsevich Zorich cocycle}
\label{sc:lyapunov}

Consider the skew product over $\tilde \cZ$, (i.e., the normalized version of $\cZ$) associated to the Kontsevich-Zorich cocycle and restricted to the space of IETs, namely 
\[ \Function{\cZ_B}{\Exc^1 \times \R^\A}{\Exc^1 \times \R^\A}{(\lambda, \pi, v)}{\left(\tilde \cZ(\pi, \lambda), {}^TB(\pi, \lambda, \overline{1})v\right)}.\]

Notice that, for any $n \in \N$, 
\[ \cZ_B^n(\lambda, \pi, v) = \big(\lambda^{(n)}, \pi^{(n)}, B_n(\gamma(\pi, \lambda))^Tv\big).\]

By Oseledet's theorem and a classical result of A. Zorich \cite{zorich_finite_1996}, for $\mu_{\cZ}$-almost-every $(\pi, \lambda) \in \Exc^1$  there exist subspaces
\[ \{0\} \subsetneq E_s(\pi, \lambda) \subsetneq E_{cs}(\pi, \lambda) \subsetneq \R^\A,\]
invariant by $\cZ_B$, corresponding to the subspaces of negative and non-positive Lyapunov exponents for the cocycle. More precisely, 
\[E_s(\pi, \lambda) = \left\{ v \in \R^\A \,\left|\, \lim_{n \to +\infty}\frac{1}{n}\log\big| B_{n}(\pi, \lambda)^Tv\big| < 0 \right\}\right.,\]
\[E_{cs}(\pi, \lambda) = \left\{ v \in \R^\A \,\left|\, \lim_{n \to +\infty} \frac{1}{n}\log\big| B_{n}(\pi, \lambda)^Tv\big| \leq 0 \right\}\right.,\]
\[\R^\A \,\setminus\, E_{cs}(\pi, \lambda) = \left\{ v \in \R^\A \,\left|\, \lim_{n \to +\infty} \frac{1}{n}\log\big| B_{n}(\pi, \lambda)^Tv\big| > 0 \right\}\right..\]
Moreover, it follows from the combination of several classical works \cite{zorich_finite_1996, forni_deviation_2002, avila_simplicity_2007} that the Kontsevich-Zorich cocycle has $2g$ non-zero Lyapunov exponents, where $1 \leq g \leq \frac{d}{2}$ is a natural number uniquely determined by the permutaiton of the IET, of the form
$$\theta_{1}> \theta_2 > \dots > \theta_g>0 > -\theta_g > \dots -\theta_2>-\theta_1.$$
In particular, $\dim(E_{cs}(\pi, \lambda)) = d - g \geq \tfrac{d}{2}$.

\subsection{Additional properties of the cocycles} Let us finish this section by proving some of the properties concerning the (twisted) Rauzy-Veech cocycle that will be used in the remaining of this article. 
\begin{lemma}
\label{lem:matrices}
Let $\gamma$ be an infinite Rauzy path, $\omega \in \R^\A$ wand $0 \leq m < n$. Then the following holds. 
\begin{enumerate}[(i)]
\item $A_{m, n}(\gamma, \omega)_{\alpha \alpha} > 0$, for any $\alpha \in \A$.
\item If $A_{m, n}(\gamma)_{\alpha\beta} > 0$ then $A_{m, n}(\gamma, \omega)_{\alpha\beta} > 0,$ for any $\alpha, \beta \in \A$.
\item \label{cond:bounded_coeff} For any $\alpha, \beta \in \A$, $A_{m, n}(\gamma, \omega)_{\alpha\beta}$ is a polynomial on $\rho:= \log \omega$ of degree at most $n - m$ with non-negative integer coefficients uniformly bounded by $A_{m, n}(\gamma)_{\alpha, \beta}$.
\end{enumerate}
\end{lemma}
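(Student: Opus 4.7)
The plan is to prove the three assertions simultaneously by induction on $n - m$, exploiting the explicit form of the one-step matrices given by \eqref{eq:RV_cocycle}. The key structural observation is that every factor $A(\pi^k, \epsilon^k, A_k(\gamma)\omega)$ is a non-negative matrix whose non-zero entries are either equal to $1$ or equal to one of the coordinates $\rho_\alpha = e^{\omega_\alpha}$ (after a substitution prescribed by the cocycle relation for the log-slope), and that setting $\omega = \overline 0$ recovers exactly the untwisted matrix $A(\pi^k, \epsilon^k, \overline 1)$.

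For the base case $n - m = 1$, one reads directly from \eqref{eq:RV_cocycle}: every diagonal entry of $A(\pi, \epsilon, \omega)$ is either $1$ or $\rho_{\alpha}$ for some $\alpha \in \A$, hence strictly positive, which gives (i); the set of indices where $A(\pi, \epsilon, \omega)$ is non-zero coincides with the corresponding set for $A(\pi, \epsilon, \overline 1)$ because $\rho_\alpha > 0$, which gives (ii); and each entry is, as a function of $\rho$, either $0$, $1$, or $\rho_\alpha$ for some $\alpha$, which is a polynomial of degree at most $1$ with a single non-negative integer coefficient bounded by the value at $\rho = \overline 1$, giving (iii).

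For the inductive step, I would use the cocycle identity
\[
A_{m, n+1}(\gamma, \omega) = A_{m, n}(\gamma, \omega) \cdot A\bigl(\pi^{n}, \epsilon^{n}, A_{n}(\gamma)\omega\bigr),
\]
combined with the fact that all the matrices involved have non-negative entries. Assertion (i) then follows from $\bigl(A_{m, n+1}(\gamma, \omega)\bigr)_{\alpha\alpha} \geq \bigl(A_{m, n}(\gamma, \omega)\bigr)_{\alpha\alpha} \cdot \bigl(A(\pi^{n}, \epsilon^{n}, \cdot)\bigr)_{\alpha\alpha} > 0$. For (ii), note that the product of two non-negative matrices whose supports coincide with their untwisted counterparts has support equal to the product of those supports, which is the support of $A_{m, n+1}(\gamma)$. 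For (iii), the product of a polynomial of degree at most $n - m$ with non-negative integer coefficients and a polynomial of degree at most $1$ with non-negative integer coefficients is a polynomial of degree at most $n - m + 1$ with non-negative integer coefficients.

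The quantitative bound in (iii) falls out of the same reasoning: since $A_{m, n}(\gamma) = A_{m, n}(\gamma, \overline 0)$ corresponds to specializing $\rho = \overline 1$, the integer $A_{m, n}(\gamma)_{\alpha\beta}$ is exactly the sum of the coefficients of the polynomial $A_{m, n}(\gamma, \omega)_{\alpha\beta}$; as all coefficients are non-negative, each is bounded by that sum. I do not expect any genuine obstacle here — the lemma is essentially a bookkeeping statement about iterated products of elementary non-negative matrices — and the only mild subtlety is remembering that the log-slope vector fed into the $k$-th factor is $A_k(\gamma)\omega$ rather than $\omega$ itself, which is absorbed into the definition of $A_{m, n}(\gamma, \omega)$ and does not affect any of the three conclusions.
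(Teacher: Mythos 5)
Your proposal is correct and follows essentially the same route as the paper: decompose $A_{m,n}(\gamma,\omega)$ into one-step factors of the form \eqref{eq:RV_cocycle}, argue by induction using non-negativity (so that supports multiply without cancellation), and obtain the coefficient bound in (iii) by observing that specializing the slope variables to $\overline{1}$ recovers $A_{m,n}(\gamma)_{\alpha\beta}$ as the sum of the non-negative integer coefficients. The only cosmetic difference is that you prove (i) directly inside the induction, whereas the paper first proves (ii) and then deduces (i) from the positivity of the diagonal of the untwisted cocycle; this changes nothing of substance.
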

\begin{proof}

Let us start by proving the second assertion using induction. Fix $m \geq 0$. Then, if $n = m + 1$ the assertion follows directly from the definition of the twisted Rauzy-Veech cocycle.  Let us assume that the second statement is true for some $n > m$ and let us show that it remains true for $n + 1$. Suppose that  $A_{m, n + 1}(\gamma)_{\alpha\beta} > 0$ for some $\alpha, \beta \in \A$. Then, as $A_{m, n + 1}(\gamma) = A_{m, n}(\gamma)A_{n, n + 1}(\gamma)$, 
\[ A_{m, n + 1}(\gamma)_{\alpha \beta} =  \sum_{\delta \in \A} A_{m, n}(\gamma)_{\alpha \delta} A_{n, n + 1}(\gamma)_{\delta \beta}  > 0.\]
Since the terms in the last equation are all non-negative, there exists $\sigma \in \A$ such that $ A_{m, n}(\gamma)_{\alpha \sigma}, A_{n, n + 1}(\gamma)_{\sigma \beta}  > 0$. Notice that $ A_{n, n + 1}(\gamma)_{\sigma \beta}  > 0$ implies that $A_{n, n + 1}(\gamma, \omega)_{\sigma\beta} > 0$ by definition of the twisted Rauzy-Veech cocycle. Moreover,   by the induction hypothesis, $A_{m, n}(\gamma)_{\alpha \sigma}  > 0$ implies that $ A_{m, n}(\gamma, \omega)_{\alpha \sigma} > 0.$ Therefore
\[ A_{m, n + 1}(\gamma, \omega)_{\alpha \beta} =  \sum_{\delta \in \A} A_{m, n}(\gamma, \omega)_{\alpha \delta} A_{n, n + 1}(\gamma, \omega)_{\delta \beta} \geq  A_{m, n}(\gamma, \omega)_{\alpha \sigma}  A_{n, n + 1}(\gamma, \omega)_{\sigma \beta} > 0.\]
This proves the second assertion.

To prove the first assertion, notice that $A_{m, n}(\gamma)_{\alpha \alpha} > 0$ for any $0 \leq m < n$ and any $\alpha \in \A$, since the terms in the diagonal of the classic Rauzy-Veech cocycle are all equal to $1$. Thus, the second assertion follows directly from the first one. 

We now prove the third assertion. Let us fix $\alpha, \beta \in \A$ and $0 \leq m < n$. Since $A_{m, n}(\gamma, \omega)$ is the product of $n - m$ matrices of the form \eqref{eq:RV_cocycle}, it follows directly that $A_{m, n}(\gamma, \omega)_{\alpha\beta}$ is a polynomial $p_{\alpha\beta}(\log \omega)$, on $\# \A$ variables, of degree at most $n - m$ with integer coefficients. Finally, noticing that the coefficients of $p_{\alpha\beta}$ are positive and that $p_{\alpha\beta}(\overline{1}) = A_{m, n}(\gamma)_{\alpha\beta}$, it follows that each coefficient is bounded by $A_{m, n}(\gamma)_{\alpha\beta}$.
\end{proof}


\section{Proof of the main result}
\label{sc:main}

Theorem \ref{thm: uniqueness} will hold for all Oseledet's generic IETs belonging to the full-measure set given below, which is a direct consequence of \cite[Proposition 3.8]{trujillo_affine_2024}.

\begin{proposition}[Bounded Central Condition]
\label{prop:generic_condition}
For a.e. $(\pi, \lambda) \in \mathfrak{G}^0_\A \times \Delta_\A $ there exists $V, K > 0$, $N \in \N$ and an increasing sequence $(n_k)_{k \geq 1} \subseteq \N$ such that, for any $k \geq 1$, the following holds.
\begin{enumerate}
\item \label{cond:bounded_central} $\left \|{}^TA_{n_k}(\pi, \lambda)\mid_{E_{cs}(\pi, \lambda)} \right \| \leq V.$
\item  \label{cond:positive_matrix}  The matrix $A_{n_k, n_k + N}(\pi, \lambda)$ is positive.
\end{enumerate}
\end{proposition}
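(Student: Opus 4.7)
The plan is to recast both (1) and (2) as the return of the Zorich renormalization into a single set of positive $\mu_\cZ$-measure, and then invoke the Birkhoff ergodic theorem (or Poincar\'e recurrence) for the ergodic system $(\tilde\cZ, \mu_\cZ)$ on $\Exc^1$. Since Zorich times are Rauzy--Veech times with $A_{z_j}(\pi,\lambda) = B_j(\pi,\lambda)$, it is enough to produce an increasing sequence of Zorich indices $j_k$ along which both conditions hold simultaneously, and then set $n_k := z_{j_k}(\pi,\lambda)$.

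\textbf{Condition (1) via an Oseledets-adapted norm.} Since every Lyapunov exponent of the Kontsevich--Zorich cocycle on $E_{cs}$ is non-positive, the standard Lyapunov-norm construction attached to Oseledets' theorem produces a measurable family of inner products $\|\cdot\|'_{(\pi,\lambda)}$ on $\R^\A$ in which ${}^TB$ is non-expanding on $E_{cs}$:
\[ \|{}^TB(\pi,\lambda) v\|'_{\tilde\cZ(\pi,\lambda)} \leq \|v\|'_{(\pi,\lambda)}, \qquad v \in E_{cs}(\pi,\lambda). \]
Iterating and comparing with the Euclidean norm through the measurable distortion
\[ D(\pi,\lambda) := \sup_{w \neq 0} \max\bigl\{\|w\|/\|w\|'_{(\pi,\lambda)},\ \|w\|'_{(\pi,\lambda)}/\|w\|\bigr\}, \]
which is finite $\mu_\cZ$-a.e., yields $\|{}^TB_j(\pi,\lambda)|_{E_{cs}(\pi,\lambda)}\| \leq D(\tilde\cZ^j(\pi,\lambda))\cdot D(\pi,\lambda)$ for every $j \geq 0$. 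Choosing $V_0 > 0$ so that $U := \{D \leq V_0\} \subseteq \Exc^1$ has positive $\mu_\cZ$-measure, condition (1) is verified at every Zorich time $j$ with $\tilde\cZ^j(\pi,\lambda) \in U$, taking $V := V_0\, D(\pi,\lambda)$.

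\textbf{Condition (2) via Veech positivity.} A classical theorem of Veech asserts that every Rauzy class admits a finite closed loop whose associated product of elementary Rauzy--Veech matrices is everywhere positive; equivalently, there are $N_0 \in \N$ and a set $P \subseteq \Exc^1$ of positive $\mu_\cZ$-measure on which $A_{0, N_0}$ is strictly positive. Up to enlarging $N_0$ so that it corresponds to an integer number $j_0$ of Zorich steps on a suitable (still positive-measure) subset of $P$, we may assume $B_{0, j_0}$ is positive on $P$ and take $N := N_0$.

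\textbf{Combining and main obstacle.} The set $W := U \cap P$ has positive $\mu_\cZ$-measure, so by Birkhoff's ergodic theorem applied to $(\tilde\cZ, \mu_\cZ)$, for $\mu_\cZ$-a.e.\ $(\pi,\lambda) \in \Exc^1$ there is an increasing sequence $j_k \to \infty$ of visit times to $W$; setting $n_k := z_{j_k}(\pi,\lambda)$ realises both (1) and (2) along $(n_k)$. The main technical obstacle lies in the adapted-norm step: one must control the full \emph{operator} norm of ${}^TA_{n_k}|_{E_{cs}(\pi,\lambda)}$ rather than the image of a single vector, which forces tracking the distortion factor at both the initial and the visited point and in particular forces $V$ to depend measurably on $(\pi,\lambda)$. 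Everything else reduces to standard Poincar\'e recurrence and Veech's combinatorial positivity.
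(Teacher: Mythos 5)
There is a genuine gap, and it sits exactly at the heart of the statement. The paper does not prove this proposition from first principles: it is quoted as a direct consequence of Proposition~3.8 of \cite{trujillo_affine_2024}, precisely because it is \emph{not} a formal consequence of Oseledets' theorem. Your step ``the standard Lyapunov-norm construction attached to Oseledets' theorem produces a measurable family of inner products in which ${}^TB$ is non-expanding on $E_{cs}$'' is false in general. For the part of $E_{cs}$ carrying strictly negative exponents an adapted norm with this property exists, but on the central Oseledets block (which is non-trivial here, since $\dim E_{cs} = d-g$ while only $g$ exponents are negative) the Lyapunov-norm construction only yields, for each $\epsilon>0$, a norm in which one step expands by at most $e^{\epsilon}$; iterating gives subexponential, not bounded, growth. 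A constant unipotent cocycle $\begin{pmatrix}1&1\\0&1\end{pmatrix}$ over any ergodic base has all exponents zero, yet $\|B_n\|\gtrsim n$ in every norm, so no subsequence of return times can make $\|{}^TB_{j}|_{E_{cs}}\|$ bounded. Your argument, if valid, would prove a false statement for such a cocycle; hence the distortion-plus-recurrence scheme cannot close condition~(1) without additional input.

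The missing input is specific to the Kontsevich--Zorich cocycle: one must know that its restriction to the central space (essentially $\Kpi$, identified with the relative part of homology) is genuinely bounded --- not merely of zero exponent --- together with control of the angle between the stable and central subspaces at the return times. This is the actual content of the cited Proposition~3.8 of \cite{trujillo_affine_2024}, and it is the step your proposal replaces with an appeal to Oseledets that does not deliver it. The remainder of your plan (Veech's positivity of a suitable Rauzy loop for condition~(2), intersecting the two positive-measure sets, and Poincar\'e recurrence for $(\tilde\cZ,\mu_{\cZ})$ to produce the sequence $n_k$) is standard and sound, but it is the easier half of the proposition.
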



Given \eqref{eq:length_vector_cones}, to prove Theorem \ref{thm: uniqueness} it suffices to show that, for any $(\pi, \lambda) \in \mathfrak{G}^0_\A \times \Delta_\A$ as in Proposition \ref{prop:generic_condition} and any $\omega \in E_{cs}(\pi, \lambda)$, the intersection
\begin{equation}
\label{eq:AIET_cone}
\mathcal{C}(\gamma, \omega) := \bigcap_{n \geq 0} A_{n}(\gamma, \omega)(\R_+^\A), \qquad \text{ where } \gamma:= \gamma(\pi, \lambda),
\end{equation}
reduces to a one-dimensional cone. 

Let us point out that a similar argument is used in a classical proof of the unique ergodicity of almost every IET. Indeed, it can be shown that the unique ergodicity of an IET $(\pi, \lambda)$ is equivalent to the nested intersection
\[  \mathcal{C}(\gamma) := \bigcap_{n \geq 0} A_{n}(\pi, \lambda)(\R_+^\A), \qquad \text{ where } \gamma:= \gamma(\pi, \lambda),\]
reducing to a one-dimensional cone (see, e.g., \cite[Chapter IV]{yoccoz_echanges_2005}). Moreover, let us mention that this implies the unique ergodicity of any $f \in \Saff$, for any $\omega \in \R^\A$, under a generic condition on $(\pi, \lambda)$ (see, e.g., \cite[Chapter III, \S 8]{yoccoz_echanges_2005}). However, the arguments to prove unique ergodicity through simplicial cones in the IET setting do not transfer to the AIET case since the latter case requires to consider the twisted Rauzy-Veech cocycle rather than the classical one.

We can define a \emph{projective metric} on $\R^\A_+$ as
\[ \textup{d}_{\textup{p}}(u, v):= \log \sup\left. \left\{ \frac{u_\alpha}{v_\beta} \frac{v_\alpha}{u_\beta}\,\right|\,  \alpha, \beta \in \A \right\},\]
for any $u, v \in \R^\A_+$. Notice that, although as an abuse of notation we refer to $\textup{d}_{\textup{p}}$ as a metric, it does not define a metric on $\R^\A_+$. Indeed, although $\textup{d}_{\textup{p}}$ is non-negative, symmetric and verifies the triangular inequality (see, e.g., \cite[Lemma 26.1]{viana_ergodic_2006}), for any  $u, v \in \R^\A_+$, we have
\[  \textup{d}_{\textup{p}}(u, v) = 0 \quad \text{ if and only if } \quad \exists t > 0, \, u = tv.\]

Nevertheless, notice that $\textup{d}_{\textup{p}}$ does define a metric on $\Delta_\A$ and that
\begin{equation}
\label{eq:projected_distance}
\textup{d}_{\textup{p}}(P_\A(v), P_\A(w)) = \textup{d}_{\textup{p}}(u, v), \qquad \text{ for any } u, v \in \R^\A_+,
\end{equation}
where $P_\A: \R^\A_+ \to \Delta_\A$ denotes the projection $x \mapsto \tfrac{x}{|x|_1}$.

In the following, given a non-negative matrix $M \in GL(\A, \R)$ we denote by $\kappa(M)$ its contraction factor in the projective metric of $\R^\A_+$, that is,
\[ \kappa(M):= \inf \left. \left\{ \frac{\textup{d}_{\textup{p}}(Mv, Mw)}{\textup{d}_{\textup{p}}(v,w)} \,\right|\, \text{ for any } v, w \in \R^\A_+ \text{ with } \textup{d}_{\textup{p}}(v,w) \neq 0 \right\}.\]
The following are classic facts about non-negative matrices and projective metrics. For a proof, we refer the reader to \cite[\S 26]{viana_ergodic_2006}.
\begin{lemma}
\label{lem:contraction_matrices}
Let $\A$ be a finite alphabet and let $\textup{d}_{\textup{p}}$ denote the projective metric on $\R^\A_+$. Then the following holds.
\begin{enumerate}[(i)]
\item $\kappa(M) \leq 1$, for any $M \in GL(\A, \R)$ non-negative.
\item $\kappa(M) < 1$, for any $M \in GL(\A, \R)$ positive. Moreover, for any $C > 0$ there exists $0 < \kappa < 1$ such that if $C^{-1} < M_{\alpha\beta} < C$, for all $\alpha, \beta \in \A$, then $\kappa(M) < \kappa$.
\item $\kappa(M_1M_2) \leq \kappa(M_1)\kappa(M_2),$  for any $M_1, M_2 \in GL(\A, \R)$ non-negative.
\end{enumerate}
\end{lemma}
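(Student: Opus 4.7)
The plan is to establish these classical Birkhoff--Hilbert facts directly from the definition of the projective metric, handling the three assertions in reverse order of difficulty.

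Assertion (iii) is immediate from the definition of $\kappa$ as the best Lipschitz constant of the map $v \mapsto Mv$ on $(\R_+^\A, \textup{d}_{\textup{p}})$. Indeed, for any $v, w \in \R^\A_+$ with $\textup{d}_{\textup{p}}(v,w) \neq 0$, chaining the defining inequality applied to the pair $(M_2 v, M_2 w)$ with the one applied to $(v, w)$ yields
\[ \textup{d}_{\textup{p}}(M_1 M_2 v, M_1 M_2 w) \;\leq\; \kappa(M_1)\,\textup{d}_{\textup{p}}(M_2 v, M_2 w) \;\leq\; \kappa(M_1)\kappa(M_2)\,\textup{d}_{\textup{p}}(v, w), \]
and taking the infimum over admissible pairs gives the claim.

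For assertion (i), the key input is the elementary observation that a ratio of sums of positive terms is a convex combination of the termwise ratios. Applying this with $a_\gamma := M_{\alpha\gamma} v_\gamma$ and $b_\gamma := M_{\alpha\gamma} w_\gamma$, so that $a_\gamma/b_\gamma = v_\gamma/w_\gamma$ whenever $M_{\alpha\gamma} > 0$, gives
\[ \min_\gamma \frac{v_\gamma}{w_\gamma} \;\leq\; \frac{(Mv)_\alpha}{(Mw)_\alpha} \;\leq\; \max_\gamma \frac{v_\gamma}{w_\gamma} \]
for every $\alpha \in \A$ whose row of $M$ is non-zero. Forming the ratio of this quantity at two indices $\alpha, \beta \in \A$ and taking the supremum over such pairs shows $\textup{d}_{\textup{p}}(Mv, Mw) \leq \textup{d}_{\textup{p}}(v, w)$, i.e. $\kappa(M) \leq 1$.

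Assertion (ii) is the genuinely non-trivial statement, namely Birkhoff's theorem. The standard strategy would be to introduce the \emph{projective diameter}
\[ \Delta(M) := \sup \left\{ \log \frac{M_{\alpha\gamma} M_{\beta\delta}}{M_{\alpha\delta} M_{\beta\gamma}} \,\middle|\, \alpha, \beta, \gamma, \delta \in \A \right\}, \]
which is finite precisely because all entries of $M$ are strictly positive, and then to establish the sharp bound $\kappa(M) \leq \tanh(\Delta(M)/4) < 1$. For the quantitative refinement, noting that the hypothesis $C^{-1} \leq M_{\alpha\beta} \leq C$ for all $\alpha, \beta \in \A$ forces $\Delta(M) \leq 4 \log C$ yields the uniform bound $\kappa(M) \leq \tanh(\log C) < 1$ depending only on $C$. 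I expect the main obstacle in a from-scratch proof to be the Birkhoff estimate $\kappa(M) \leq \tanh(\Delta(M)/4)$ itself: its standard proof exploits the invariance of $\textup{d}_{\textup{p}}$ under diagonal rescalings to reduce to a careful two-dimensional computation, and for the present paper only the qualitative conclusion $\kappa(M) < 1$ is needed, so it is natural --- as the author does --- to defer to \cite[\S 26]{viana_ergodic_2006} rather than reproduce the argument in full.
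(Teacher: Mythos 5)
The paper offers no proof of this lemma at all, simply deferring to \cite[\S 26]{viana_ergodic_2006}, and your sketch is exactly the standard argument from that reference: the mediant (convex-combination) inequality for (i), submultiplicativity of Lipschitz constants for (iii), and Birkhoff's bound $\kappa(M)\le\tanh(\Delta(M)/4)$ together with the observation $\Delta(M)\le 4\log C$ for the quantitative part of (ii). The one ingredient you leave unproved is precisely the one the paper also delegates to the citation, so the proposal is correct and matches the paper's approach; note only that the paper's displayed definition of $\kappa$ as an infimum must be read as a supremum --- the best Lipschitz constant, as you treat it --- for the lemma and its later application to make sense.
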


We can now prove Theorem \ref{thm: uniqueness}.

\begin{proof}[Proof of Theorem \ref{thm: uniqueness}] 
Let $(\pi, \lambda) \in  \mathfrak{G}^0_\A \times \Delta_\A $ as in Proposition \ref{prop:generic_condition} and $\omega \in E_{cs}(\pi, \lambda)$. In view of  \eqref{eq:length_vector_cones}, to prove the Theorem it suffices to show that the intersection given by \eqref{eq:AIET_cone} reduces to a one-dimensional cone, or, equivalently, to show that
\begin{equation}
\label{eq:projected_cone}
P_\A \big( \mathcal{C}(\gamma, \omega) \big) =  P_\A \left( \bigcap_{n \geq 0} A_{n}(\gamma, \omega)(\R_+^\A) \right),
\end{equation}
reduces to a point, where $P_\A$ denotes the projection from $\R^\A_+$ onto $\Delta_\A$ and $\gamma:= \gamma(\pi, \lambda)$.

Let $N$ and $(n_k)_{k \geq 1}$ be given by Proposition \ref{prop:generic_condition}. Up to considering a subsequence we may assume, without loss of generality, that $n_k - n_{k - 1} \geq N$ for all $k \geq 1$. Let us denote $m_k:= n_k + N$, for any $k \geq 1$. 

By \eqref{cond:positive_matrix} in Proposition \ref{prop:generic_condition}, the matrix $A_{m_1}(\gamma) = A_{m_1}(\pi, \lambda)$ is positive. Thus, by Lemma \ref{lem:matrices}, the matrix $A_{m_1}(\gamma, \omega)$ is positive. Therefore 
\[\overline{A_{m_1}(\gamma, \omega)(\R^\A_+)} \subseteq \R^\A_+.\]
In particular, 
\[K:= P_\A \left( \overline{A_{m_1}(\gamma, \omega)(\R^\A_+)} \right) \subseteq \Delta_\A\]
 is a compact subset of $\Delta_\A$ and has finite diameter with respect to $\textup{d}_{\textup{p}}$, that is,
\[\sup_{v, w \in K} \textup{d}_{\textup{p}}(v, w) < D,\]
for some $D > 0$. 

Since the terms in the intersection \eqref{eq:projected_cone} form a nested sequence (see \eqref{eq:nested_seq}), to prove the theorem it suffices to show that
\begin{equation}
\label{eq:limit_contraction}
\lim_{k \to \infty} \kappa(A_{m_1, m_k}(\gamma, \omega)) = 0.
\end{equation}
Indeed, for any $v, w \in \bigcap_{n \geq 0} A_{n}(\gamma, \omega)(\R_+^\A)$ and any $k \geq 1$ there exist $v_k, w_k \in  A_{m_1}(\gamma, \omega)(\R_+^\A)$ such that $v = A_{m_1, m_k}(\gamma, \omega) v_k$ and $w = A_{m_1, m_k}(\gamma, \omega) w_k.$ Hence
\begin{align*}
\textup{d}_{\textup{p}}(v, w)  &= \textup{d}_{\textup{p}}(A_{m_1, m_k}(\gamma, \omega) v_k, A_{m_1, m_k}(\gamma, \omega) w_k) \\
& \leq \kappa(A_{m_1, m_k}(\gamma, \omega)) \textup{d}_{\textup{p}}(v_k, w_k) \\ &= \kappa(A_{m_1, m_k}(\gamma, \omega))  \textup{d}_{\textup{p}}(P_\A(v_k), P_\A(w_k))  \\
& \leq \kappa(A_{m_1, m_k}(\gamma, \omega))  D,
 \end{align*}
 where the third line follows from \eqref{eq:projected_distance}. Thus, assuming \eqref{eq:limit_contraction}, it follows that $P_\A(v) = P_\A(w)$,  for any $v, w  \in \bigcap_{n \geq 0} A_{n}(\gamma, \omega)(\R_+^\A)$, which implies that the projected simplicial cone given by \eqref{eq:projected_cone} reduces to a point.
 
Hence, to finish the proof of the theorem, let us show \eqref{eq:limit_contraction}. 

Notice that, for any $k \geq 1$, we can express the matrix $A_{m_1, m_{k + 1}}(\gamma, \omega)$ as 
\begin{align*}
A_{m_1, m_{k + 1}}(\gamma, \omega) & = A_{m_1, m_k}(\gamma, \omega)A_{m_k, m_{k + 1} - N}(\gamma, \omega)A_{m_{k + 1} - N, m_{k + 1}}(\gamma, \omega) \\
& =  A_{m_1, m_k}(\gamma, \omega)A_{m_k, n_{k + 1}}(\gamma, \omega)A_{n_{k + 1}, n_{k + 1} + N}(\gamma, \omega)
\end{align*} 
We claim that
\begin{equation}
\label{eq:uniform_contraction}
\kappa:= \sup_{k \geq 1} \kappa(A_{n_{k + 1}, n_{k + 1} + N}(\gamma, \omega))  < 1.
\end{equation}
Before showing \eqref{eq:uniform_contraction}, let us show how it implies \eqref{eq:limit_contraction}. Assuming \eqref{eq:uniform_contraction}, by Lemma \ref{lem:contraction_matrices}, it follows that 
\begin{align*}
\kappa(A_{m_1, m_{k + 1}}(\gamma, \omega)) & \leq \kappa(A_{m_1, m_k}(\gamma, \omega)) \kappa(A_{m_k, n_{k + 1}}(\gamma, \omega)) \kappa(A_{n_{k + 1}, n_{k + 1} + N}(\gamma, \omega)) \\ & \leq \kappa(A_{m_1, m_k})  \kappa,
\end{align*}
for any $k \geq 1$. Therefore, 
\[ \kappa(A_{m_1, m_{k + 1}}) \leq \kappa ^k \xrightarrow[k \to \infty]{} 0,\]
which finishes the proof of \eqref{eq:limit_contraction}. 

Finally, let us prove \eqref{eq:uniform_contraction}. Fix $k \geq 1$. By \eqref{cond:bounded_central}  in Proposition \ref{prop:generic_condition}, 
\[\big|\omega^{n_{k + 1}}\big| = \big| {}^TA_{n_k}(\gamma) \omega\big| \leq V |\omega|.\]
where $V$ is the constant in Proposition \ref{prop:generic_condition}.
Since, for any $0 \leq i \leq N$,  the matrices $A_{n_{k + 1}, n_{k + 1 + i}}(\gamma)$ are products of at most $N$ matrices whose entries 
 are uniformly bounded, it follows that
 \begin{equation}
 \label{eq:bounded_slopes}
 \big|\omega^{n_{k + 1} + i}\big| = \big|{}^TA_{n_{k + 1}, n_{k + 1 + i}}(\gamma) \omega^{n_{k + 1}}\big| \leq C_N |\omega^{n_{k + 1}}| \leq C_N V|\omega|,
 \end{equation}
 for any $0 \leq i \leq N$, where $C_N$ is a constant depending only on $N$.
 
By Proposition \ref{prop:generic_condition}, $A_{n_{k + 1}, n_{k + 1} + N}(\gamma)$ is a positive matrix. 
Thus, by Lemma \ref{lem:matrices},  the matrix $A_{n_{k + 1}, n_{k + 1} + N}(\gamma, \omega)$ is positive. Moreover, notice that  $A_{n_{k + 1}, n_{k + 1} + N}(\gamma, \omega)$ is the product of $N$ matrices whose entries can be uniformly bounded, from above and below, as functions of the norms of $\omega^{n_{k + 1}}, \dots, \omega^{n_{k + 1} + N}$ (see Condition \eqref{cond:bounded_coeff} in Lemma \ref{lem:matrices}). Hence, there exists a constant $\Gamma = \Gamma(V, C_N, \omega) > 0$ such that
\[ \Gamma^{-1} < A_{m_{k + 1} - N, m_{k + 1}}(\gamma, \omega)_{\alpha\beta} < \Gamma, \qquad \text{ for any } \alpha, \beta \in \A.\]
Therefore,  \eqref{eq:uniform_contraction} follows from the equation above and Lemma \ref{lem:contraction_matrices}. 
\end{proof}

\section*{Acknowledgments} 
This project has received funding from the European Union's Horizon 2020 research and innovation programme under the Marie Skłodowska-Curie grant agreement No. 101154283. 
\bibliographystyle{acm}
\bibliography{Uniqueness_of_AIET_Arxiv.bbl}

\begin{thebibliography}{10}

\bibitem{avila_simplicity_2007}
{\sc Avila, A., and Viana, M.}
\newblock Simplicity of {Lyapunov} spectra: proof of the {Zorich}-{Kontsevich}
  conjecture.
\newblock {\em Acta Mathematica 198}, 1 (2007), 1--56.
\newblock Publisher: Institut Mittag-Leffler.

\bibitem{berk_rigidity_2024}
{\sc Berk, P., and Trujillo, F.}
\newblock Rigidity for piecewise smooth circle homeomorphisms and certain
  {GIETs}.
\newblock {\em Advances in Mathematics 441\/} (Apr. 2024), 109560.

\bibitem{cobo_piece-wise_2002}
{\sc Cobo, M.}
\newblock Piece-wise affine maps conjugate to interval exchanges.
\newblock {\em Ergodic Theory and Dynamical Systems 22}, 2 (Apr. 2002),
  375--407.

\bibitem{cobo_wandering_2018}
{\sc Cobo, M., Gutiérrez-Romo, R., and Maass, A.}
\newblock Wandering intervals in affine extensions of self-similar interval
  exchange maps: the cubic {Arnoux}–{Yoccoz} map.
\newblock {\em Ergodic Theory and Dynamical Systems 38}, 7 (Oct. 2018),
  2537--2570.

\bibitem{cunha_rigidity_2014}
{\sc Cunha, K., and Smania, D.}
\newblock Rigidity for piecewise smooth homeomorphisms on the circle.
\newblock {\em Advances in Mathematics 250\/} (Jan. 2014), 193--226.

\bibitem{duryev_dilation_2019}
{\sc Duryev, E., Fougeron, C., and Ghazouani, S.}
\newblock Dilation surfaces and their {Veech} groups.
\newblock {\em Journal of Modern Dynamics 14\/} (2019), 121--151.

\bibitem{forni_solutions_1997}
{\sc Forni, G.}
\newblock Solutions of the cohomological equation for area-preserving flows on
  compact surfaces of higher genus.
\newblock {\em Annals of Mathematics. Second Series 146}, 2 (1997), 295--344.

\bibitem{forni_deviation_2002}
{\sc Forni, G.}
\newblock Deviation of {Ergodic} {Averages} for {Area}-{Preserving} {Flows} on
  {Surfaces} of {Higher} {Genus}.
\newblock {\em Annals of Mathematics 155}, 1 (2002), 1--103.
\newblock Publisher: Annals of Mathematics.

\bibitem{ghazouani_local_2021}
{\sc Ghazouani, S.}
\newblock Local rigidity for periodic generalised interval exchange
  transformations.
\newblock {\em Inventiones mathematicae 226}, 2 (Nov. 2021), 467--520.

\bibitem{ghazouani_priori_2023}
{\sc Ghazouani, S., and Ulcigrai, C.}
\newblock A priori bounds for {GIETs}, affine shadows and rigidity of
  foliations in genus two.
\newblock {\em Publications mathématiques de l'IHÉS\/} (Oct. 2023).

\bibitem{keane_interval_1975}
{\sc Keane, M.}
\newblock Interval exchange transformations.
\newblock {\em Mathematische Zeitschrift 141}, 1 (Feb. 1975), 25--31.

\bibitem{levitt_decomposition_1987}
{\sc Levitt, G.}
\newblock La décomposition dynamique et la différentiabilité des
  feuilletages des surfaces.
\newblock {\em Annales de l'Institut Fourier 37}, 3 (1987), 85--116.

\bibitem{marmi_cohomological_2005}
{\sc Marmi, S., Moussa, P., and Yoccoz, J.-C.}
\newblock The cohomological equation for {Roth}-type interval exchange maps.
\newblock {\em Journal of the American Mathematical Society 18}, 4 (2005),
  823--872.

\bibitem{marmi_affine_2010}
{\sc Marmi, S., Moussa, P., and Yoccoz, J.-C.}
\newblock Affine interval exchange maps with a wandering interval.
\newblock {\em Proceedings of the London Mathematical Society 100}, 3 (2010),
  639--669.

\bibitem{marmi_linearization_2012}
{\sc Marmi, S., Moussa, P., and Yoccoz, J.-C.}
\newblock Linearization of generalized interval exchange maps.
\newblock {\em Annals of Mathematics. Second Series 176}, 3 (2012), 1583--1646.

\bibitem{masur_interval_1982}
{\sc Masur, H.}
\newblock Interval {Exchange} {Transformations} and {Measured} {Foliations}.
\newblock {\em Annals of Mathematics 115}, 1 (1982), 169--200.
\newblock Publisher: Annals of Mathematics.

\bibitem{rauzy_echanges_1979}
{\sc Rauzy, G.}
\newblock Échanges d'intervalles et transformations induites.
\newblock {\em Acta Arithmetica 34\/} (1979), 315--328.
\newblock Publisher: Instytut Matematyczny Polskiej Akademii Nauk.

\bibitem{trujillo_affine_2024}
{\sc Trujillo, F., and Ulcigrai, C.}
\newblock Affine interval exchange maps with a singular conjugacy to an {IET}.
\newblock {\em Annali Scuola Normale Superiore - Classe di Scienze\/} (2024).

\bibitem{veech_gauss_1982}
{\sc Veech, W.~A.}
\newblock Gauss {Measures} for {Transformations} on the {Space} of {Interval}
  {Exchange} {Maps}.
\newblock {\em Annals of Mathematics 115}, 2 (1982), 201--242.
\newblock Publisher: Annals of Mathematics.

\bibitem{viana_ergodic_2006}
{\sc Viana, M.}
\newblock Ergodic {Theory} of {Interval} {Exchange} {Maps}.
\newblock {\em Revista Matemática Complutense 19}, 1 (Apr. 2006), 7--100.
\newblock Number: 1.

\bibitem{yoccoz_echanges_2005}
{\sc Yoccoz, J.-C.}
\newblock Échanges d'intervalles.
\newblock 2005.

\bibitem{zorich_finite_1996}
{\sc Zorich, A.}
\newblock Finite {Gauss} measure on the space of interval exchange
  transformations. {Lyapunov} exponents.
\newblock {\em Annales de l'Institut Fourier 46}, 2 (1996), 325--370.

\end{thebibliography}
\end{document}